\documentclass{birkjour}
\usepackage{amsmath,amsfonts,latexsym,amssymb,url}
\usepackage{graphicx}
\usepackage{hyperref,pdfsync}
\usepackage{amsmath,txfonts,pifont,bbding,pxfonts,manfnt}
\usepackage[active]{srcltx}
\usepackage{wasysym,pstricks}
\usepackage{enumerate}

\newcommand{\R}{{\mathbb R}} %%reals

\newtheorem{theorem}{Theorem}

\newtheorem{lemma}[theorem]{Lemma}
\newtheorem{example}[theorem]{Example}
\newtheorem{proposition}[theorem]{Proposition}

\newtheorem{remark}[theorem]{Remark}

\newtheorem{problem}[theorem]{Problem}

\begin{document}

%-------------------------------------------------------------------------
% editorial commands: to be inserted by the editorial office
%
%\firstpage{1} \volume{228} \Copyrightyear{2004} \DOI{003-0001}
%
%
%\seriesextra{Just an add-on}
%\seriesextraline{This is the Concrete Title of this Book\br H.E. R and S.T.C. W, Eds.}
%
% for journals:
%
%\firstpage{1}
%\issuenumber{1}
%\Volumeandyear{1 (2004)}
%\Copyrightyear{2004}
%\DOI{003-xxxx-y}
%\Signet
%\commby{inhouse}
%\submitted{March 14, 2003}
%\received{March 16, 2000}
%\revised{June 1, 2000}
%\accepted{July 22, 2000}
%
%
%
%---------------------------------------------------------------------------
%Insert here the title, affiliations and abstract:
%

\title[Functional Equations and the Cauchy Mean Value Theorem]{Functional Equations and the Cauchy Mean Value Theorem}

%----------Author 1
\author{Zolt\'an M. Balogh}

\address{%
Institute of Mathematics\\
University of Bern\\
Sidlerstrasse 5\\
CH 3012\\
Switzerland}

\email{zoltan.balogh@math.unibe.ch}

\thanks{The authors were supported by the Swiss National Science Foundation,
SNF, grant no. 200020$\_$146477.}

%----------Author 2
\author{Orif O. Ibrogimov}

\address{%
Institute of Mathematics\\
University of Bern\\
Sidlerstrasse 5\\
CH 3012\\
Switzerland}
\email{orif.ibrogimov@math.unibe.ch}

%----------Author 3
\author{Boris S. Mityagin}

\address{%
Department of Mathematics\\
The Ohio State University\\
231 W 18th Ave. MW 534\\
Columbus Ohio 43210\\
 U.S.A.}
\email{mityagin.1@osu.edu}

%----------classification, keywords, date
\subjclass{39B22}

\keywords{Mean Value Theorem, Functional Equations}

\date{\today}
%----------additions
\dedicatory{Dedicated to Professor J\"{u}rg R\"{a}tz
% on his Xth birthday
}
%%% ----------------------------------------------------------------------

\begin{abstract}
The aim of this note is to characterize all pairs of sufficiently smooth functions for which the mean value in the Cauchy Mean Value Theorem is taken at a point which has a well-determined position in the interval. As an application of this  result, a partial answer is given to a question posed by Sahoo and Riedel.
\end{abstract}

%%% ----------------------------------------------------------------------
\maketitle
%%% ----------------------------------------------------------------------

%
\section{Introduction}
Given two differentiable functions $F,G:\R \to \R$, the Cauchy Mean Value Theorem (MVT) states that for any interval $[a,b]\subset \R$, where $a<b$, there exists a point $c$ in $(a,b)$ such that 
\begin{equation} \label{Cauchy}
[F(b)-F(a)]\,g(c) = [G(b)-G(a)]\,f(c),
\end{equation}
where $f= F'$ and $g=G'$. A particular situation is the Lagrange MVT when $G(x)=x$ is the identity function, 
in which case \eqref{Cauchy} reads as
\begin{equation} \label{Lagrange}
F(b)-F(a) = f(c)(b-a).
\end{equation}

The problem to be investigated in this note can be formulated as follows.
\begin{problem}
\label{the-problem}
Find all pairs $(F,G)$ of differentiable functions $F, G :\R\to\R$ satisfying the following equation 
\begin{equation}
\label{eqn:cauchy.gen}
[F(b)-F(a)] \, g(\alpha a+\beta b) = [G(b)-G(a)] \, f(\alpha a+\beta b)  
\end{equation}
for all $a,b \in \R$, $a<b$, where $f=F'$, $g=G'$, $\alpha, \beta\in (0,1)$ are fixed and $\alpha+\beta=1$.
\end{problem}

For the case of the Lagrange MVT with $c=\frac{a+b}{2}$, this problem was considered first by Haruki \cite{Haruki} and independently by Acz\'el \cite{Aczel}, proving that the quadratic functions are the only solutions to \eqref{Lagrange}. This problem can serve as a starting point for various functional equations \cite{Sahoo-Riedel}. More general functional equations have been considered 
even in the abstract setting of groups by several authors including Kannappan \cite{Kannappan}, Ebanks \cite{Ebanks}, Fechner-Gselmann \cite{Fechner-Gselmann}. On the other hand, the result of Acz\'el and Haruki has been generalized for higher order Taylor expansion by Sablik \cite{Sablik}. 

For the more general case of the Cauchy MVT much less is known. We mention Aumann \cite{Aumann} illustrating the geometrical significance of this equation and the recent contribution of P\'ales \cite{Pales} providing the solution of a related equation under additional assumptions. In this note we provide a different approach to the Cauchy MVT.
 As it will turn out, the most challenging situation corresponds to $c=\frac{a+b}{2}$ in which case our main result is the following:

\begin{theorem} 
\label{thm:sym.cauchy.main}
Assume that $F,G: \R \to \R$ are three times differentiable functions with derivatives $F'=f$, $G'=g$ such that
\begin{equation}
\label{eqn:cauchy.sym1}
[F(b)-F(a)]\,g\bigl(\frac{a+b}{2}\bigr) = [G(b)-G(a)]\,f\bigl(\frac{a+b}{2}\bigr) 
%\quad \text{for all} \quad a,b\in \R.
\end{equation}
for all $a,b\in\R$. Then one of the following possibilities holds: 

\begin{enumerate}[(a)]

\item $\{1, F, G\}$ are linearly dependent on $\R$;

\item $F,G \in \mathrm{span} \{1,x,x^2\}$, $x\in\R$;

\item there exists a non-zero real number $\mu$ such that 
\[
F,G \in \mathrm{span} \{1,e^{\mu x}, e^{-\mu x}\}, \quad x\in\R;
\]

\item there exists a non-zero real number $\mu$ such that 
\[
F,G \in \mathrm{span} \{1,\sin(\mu x), \cos(\mu x)\}, \quad x\in\R.
\]
\end{enumerate} 
\end{theorem}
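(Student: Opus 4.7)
The plan is to reduce the problem to a constant-coefficient linear second-order ODE for $f$ and $g$, whose solution spaces will directly produce the four cases. After the substitution $a = x-h$, $b = x+h$, equation~\eqref{eqn:cauchy.sym1} becomes
\[
[F(x+h)-F(x-h)]\,g(x) = [G(x+h)-G(x-h)]\,f(x) \qquad (\star)
\]
for all $x,h\in\R$. Both sides are odd in $h$, so Taylor-expanding to order $h^3$ and matching the coefficient of $h^3$ yields the pointwise identity $f''(x)\,g(x) = f(x)\,g''(x)$, which is equivalent to saying that the Wronskian $W(f,g):=fg'-f'g$ is a constant $C$ on $\R$.

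If $C=0$ then $f$ and $g$ are proportional and $\{1,F,G\}$ is linearly dependent, giving case~(a). Assume henceforth $C\neq 0$. Then $f,g$ are linearly independent, and $f(x),g(x)$ never vanish simultaneously (else $W=0$ at that point), so the function
\[
Q(x) := -\frac{f''(x)}{f(x)} = -\frac{g''(x)}{g(x)} \quad \text{(whichever is defined)}
\]
is well-defined on $\R$ and $\{f,g\}$ is a basis of solutions of $y''+Q(x)\,y = 0$. Cases (b)--(d) will follow once $Q$ is shown to be constant.

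To prove constancy of $Q$ I exploit the full $h$-dependence of $(\star)$, not merely its low-order Taylor data at $h=0$. Differentiating $(\star)$ three times in $h$ gives
\[
g(x)\bigl[f''(x+h)+f''(x-h)\bigr] = f(x)\bigl[g''(x+h)+g''(x-h)\bigr] \qquad \text{for all } x,h\in\R.
\]
Substituting $f''=-Qf$, $g''=-Qg$ and using the identity
\[
\psi_x(x+h)+\psi_x(x-h)=0, \qquad \psi_x(u):=g(x)f(u)-f(x)g(u),
\]
which is just the first $h$-derivative of $(\star)$ rearranged, the cross-terms collapse into the factored relation
\[
\psi_x(x+h)\bigl[Q(x+h)-Q(x-h)\bigr]=0 \qquad \text{for all } x,h\in\R.
\]
Since $\psi_x'(x) = g(x)f'(x) - f(x)g'(x) = -C\neq 0$, the function $h\mapsto\psi_x(x+h)$ has a simple zero at $h=0$ and is nonzero on a deleted neighborhood of $0$ (locally uniformly in $x$). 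Hence $Q(x+h)=Q(x-h)$ for all $x$ and all sufficiently small $h$, so $Q$ is constant on a neighborhood of every point and therefore constant on $\R$.

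With $Q\equiv\lambda$, the solution space of $y''+\lambda y=0$ equals $\mathrm{span}\{1,x\}$, $\mathrm{span}\{e^{\mu x},e^{-\mu x}\}$, or $\mathrm{span}\{\sin(\mu x),\cos(\mu x)\}$ according as $\lambda=0$, $\lambda=-\mu^2<0$, or $\lambda=\mu^2>0$. Integrating $f,g$ and adjoining the constant~$1$ then places $F,G$ in the spans listed in (b), (c), (d) respectively. The main obstacle is the third step: promoting the single identity $f''g=fg''$ (which by itself is consistent with any continuous $Q$) to constancy of $Q$. The decisive device is the antisymmetry of $\psi_x$ about $x$, which in the $h$-full differentiated equation cleanly isolates the factor $Q(x+h)-Q(x-h)$.
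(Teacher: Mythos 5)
Your case $C=0$ contains a genuine gap. You assert that $fg'-f'g\equiv 0$ on $\R$ implies that $f$ and $g$ are proportional, hence that $\{1,F,G\}$ is linearly dependent. For general smooth functions this implication is false: take $f(x)=e^{-1/x^2}$ for $x>0$, $f(x)=0$ for $x\le 0$, and $g(x)=f(-x)$; the Wronskian vanishes identically but $f$ and $g$ are not proportional. A vanishing Wronskian only yields $f=A_\sigma g$ with a possibly different constant $A_\sigma$ on each connected component $I_\sigma$ of $U_g=\{g\neq 0\}$, and it says nothing at all on the set where $g=0$ but $f\neq 0$. Closing this case requires using the functional equation itself, not merely the pointwise identity $f''g=fg''$; this is precisely where the paper invests its effort: Proposition~\ref{prop:cauchy.gen.U_g<>empty} (the semi-strip argument) shows that $U_f\cap U_g=\emptyset$ forces one of $F,G$ to be constant, and Proposition~\ref{prop:X} (the parallelogram argument) shows that all the constants $A_\sigma$ coincide and that $U_f=U_g$, whence $f=Ag$ globally. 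You must either import such a gluing argument or add a hypothesis like $g\neq 0$ everywhere, under which $(f/g)'=-(fg'-f'g)/g^2=0$ does finish the case.

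Your case $C\neq 0$, by contrast, is correct and takes a genuinely different and more direct route than the paper. Since $C\neq 0$ forbids common zeros of $f$ and $g$, your potential $Q$ is globally defined, and the factorization $[Q(x+h)-Q(x-h)]\,\psi_x(x+h)=0$, together with the simple zero of $\psi_x$ at $h=0$ (the local uniformity you invoke follows from continuity of $f'$ and $g'$ via the mean value theorem), forces $Q$ to be locally and hence globally constant. The paper instead integrates $f'g-fg'=K$ into the condition \eqref{integralcondition}, differentiates it and specializes $h=x-x_0$ to obtain a constant-coefficient ODE only on each component of $U_g$, and must then patch the components together (Claims 1 and 2 in the proof and Lemma~\ref{lem:cauchy.sym.f(q)=0}). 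Your global definition of $Q$ bypasses that entire patching step in the $C\neq 0$ regime; only the $C=0$ branch still needs it.
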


The paper is organized as follows. In Section 2 we consider the problem first for the known case of the Lagrange mean value theorem  as an illustration of our method. In Section 3 we provide a preliminary result that will allow to pass local information to a global one about the pairs of differentiable functions $(F,G)$ 
%with derivatives $f=F'$, $g=G'$ 
satisfying \eqref{eqn:cauchy.gen}. In Sections 4, 5 we consider the asymmetric ($\alpha\neq\beta$) and symmetric ($\alpha=\beta=1/2$) cases, respectively. Section 6 is for final remarks. Here we also provide a partial result to an open problem by Sahoo and Riedel which corresponds to a more general version of \eqref{eqn:cauchy.gen}.

\section{The Lagrange MVT with fixed mean value} 

Note that every $c \in (a,b)$ can be written uniquely as $c=\alpha a+\beta b$ for some $\alpha,\beta\in (0,1)$ with $\alpha+\beta=1$. It is easy to check that \eqref{Lagrange} holds for all $a,b\in\R$ with fixed $\alpha \neq 1/2$ if $F$ is a linear function, and with  $\alpha = 1/2$ if $F$ is a quadratic function. We claim that the converse of this statement is also true. As mentioned earlier, there are various proofs of the latter in the literature, see for example \cite{Aczel}, \cite{Haruki} , \cite{Sahoo-Riedel}. Nevertheless, we give here  a short and self-contained argument mainly to illustrate our approach to the more general case of the Cauchy MVT.

\begin{proposition}\label{lagrange}
Let $\alpha \in (0,1)$ be fixed and $\beta= 1-\alpha$. 
Assume that $F:\R \to \R$ is a continuously differentiable function with $F\rq{}=f$  such that 
\begin{equation} \label{convex-comb}
F(b)-F(a)= f(\alpha b+ \beta a) (b-a) \quad \text{for all} \quad a,b\in \R \quad \text{with} \quad a<b.
\end{equation} 
Then the following statements hold:
\begin{enumerate} 
\item if $\alpha \neq 1/2$ then $F$ is a linear function;
\item if $\alpha = 1/2$ then $F$ is a quadratic function.
\end{enumerate}
\end{proposition}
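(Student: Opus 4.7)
My plan is to reduce equation \eqref{convex-comb} to a pointwise functional identity satisfied by $f$ alone, and then to solve that identity in each of the two cases. The key device is to fix $c \in \R$ and parametrize: for $a < c$, setting $b = (c-\beta a)/\alpha$ gives $a < c < b$ and $\alpha b + \beta a = c$. Along this one-parameter family \eqref{convex-comb} reads $F(b)-F(a)=f(c)(b-a)$, and differentiating in $a$ (using $db/da = -\beta/\alpha$ and $F'=f$) yields, after rearrangement, the identity
\[
\alpha f(a) + \beta f(b) \;=\; f(\alpha b+\beta a), \qquad a < b,
\]
which extends to $a \le b$ by continuity.

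In the symmetric case $\alpha=\beta=1/2$, the identity is symmetric in $(a,b)$ and coincides with Jensen's functional equation $f(\tfrac{a+b}{2}) = \tfrac12(f(a)+f(b))$. Its continuous solutions are the affine functions $f(x) = px+q$, so integrating shows $F(x)=\tfrac{p}{2}x^{2}+qx+r$ is quadratic.

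In the asymmetric case $\alpha \neq 1/2$, the weights on the two sides of the identity are swapped, and I expect to show that $f$ must be constant. Assuming for the moment that $f\in C^{1}$, I would differentiate the identity in $b$ to get $\beta f'(b) = \alpha f'(\alpha b + \beta a)$ for $a<b$, and then let $a \to b^{-}$ (invoking continuity of $f'$) to collapse this to $(\beta-\alpha)\,f'(b) = 0$. Since $\alpha \neq \beta$, this forces $f'\equiv 0$, so $f$ is constant and $F$ is linear.

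The main obstacle will be justifying the upgrade of $f$ from $C^{0}$ to $C^{1}$. I would bootstrap in two steps. First, putting $a=0$ in \eqref{convex-comb} gives the explicit formula $f(\alpha b) = (F(b)-F(0))/b$ for $b\neq 0$; the right-hand side is $C^{1}$ on $\R\setminus\{0\}$ because $F$ is $C^{1}$, so $f\in C^{1}(\R\setminus\{0\})$. Second, to reach the single remaining point $0$, I would fix a nonzero $a$ (say $a=1$) in the derived identity and write $f(b) = \beta^{-1}\bigl[f(\alpha b+\beta) - \alpha f(1)\bigr]$ for $b$ near $0$; since $\alpha b+\beta$ stays near $\beta\neq 0$, the right-hand side is $C^{1}$ in $b$, so $f$ is $C^{1}$ at $0$ as well. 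With $f\in C^{1}(\R)$ in hand, the collapse above finishes the proof.
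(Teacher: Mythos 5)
Your proof is correct, and it pivots on the same identity as the paper's, namely $\alpha f(a)+\beta f(b)=f(\alpha b+\beta a)$ for $a<b$ (the paper obtains exactly this by substituting $x=\alpha b+\beta a$, $h=b-a$ and differentiating once in $h$), though you derive it by differentiating along the one-parameter family with fixed mean $c$. From there the routes diverge. The paper first upgrades $F$ to $C^{\infty}$ via the observation $f(x)=[F(x+\beta h)-F(x-\alpha h)]/h$ for fixed $h>0$, then differentiates the identity once more in $h$ and lets $h\searrow 0$, handling both cases uniformly. You instead invoke the classification of continuous solutions of Jensen's equation in the symmetric case --- which needs no regularity beyond the hypothesis $f\in C^{0}$, at the price of citing that classification --- and in the asymmetric case you perform a two-step $C^{1}$ bootstrap followed by one differentiation and a limit $a\to b^{-}$. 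Both are sound; your asymmetric bootstrap is more laborious than the paper's one-line smoothing remark, while your symmetric case is more economical in the regularity it actually uses.

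Two small slips, neither fatal. First, the formula $f(\alpha b)=(F(b)-F(0))/b$ follows from \eqref{convex-comb} with $a=0$ only for $b>0$; for $b<0$ you must use $b$ as the left endpoint and $0$ as the right one, which yields $f(\beta b)=(F(b)-F(0))/b$ instead. The conclusion $f\in C^{1}(\R\setminus\{0\})$ survives. Second, in the step recovering regularity at $0$, your identity $f(b)=\beta^{-1}\bigl[f(\alpha b+\beta)-\alpha f(1)\bigr]$ presupposes $1<b$; for $b$ near $0$ you must apply the derived identity with $b$ as the left endpoint and $1$ as the right, giving $f(b)=\alpha^{-1}\bigl[f(\alpha+\beta b)-\beta f(1)\bigr]$, whose argument stays near $\alpha\neq 0$, so the bootstrap still closes.
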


\begin{proof}
Let us denote by $\alpha b+ \beta a= x$ and $b-a= h$. Then \eqref{convex-comb} reads as 
\begin{equation} \label{new-lagrange}
F(x+\beta h) -F(x-\alpha h)= f(x)\,h \quad \text{for all} \quad x \in \R, h>0.
\end{equation} 
From this equation it is apparent that $f=F'$ is differentiable as a linear combination of two differentiable functions and thus $F$ is twice differentiable. By induction, it follows that $F$ is 
%a $C^{\infty}$-function. 
infinitely differentiable.

Differentiating \eqref{new-lagrange} with respect to $h$, we obtain the relation
\begin{equation} \label{diff-lagrange}
\beta f(x+\beta h) + \alpha f(x-\alpha h) = f(x), \quad 
%\text{for all} \quad 
x \in \R, h>0.
 \end{equation}
Again, we differentiate \eqref{diff-lagrange} with respect to $h$ and find that
\begin{equation*}
\beta^2 f'(x+\beta h) -\alpha^2 f'(x-\alpha h) =0, \quad 
%\text{for all} \quad 
x \in \R, h>0.
\end{equation*}
Since $f'$ is continuous, letting $h \searrow 0$, we obtain 
$$ (\beta^2-\alpha^2) f'(x)=(1-2\alpha)f'(x)=0 \quad \text{for all} \quad x\in \R.$$
If $\alpha \neq 1/2$, this implies that $f'=0$ identically. Therefore $f$ is constant and thus $F$ is a linear function, proving the first statement. 

If $\alpha = 1/2$, then the equation \eqref{diff-lagrange} reads as
\begin{equation*}
f\bigl(x+\frac{h}{2}\bigr) +f\bigl(x-\frac{h}{2}\bigr)= 2 f(x), \quad 
%\text{for all} \quad 
x \in \R, h>0,
\end{equation*}
and twice differentiation with respect to $h$ leads to
\begin{equation*}
f''\bigl(x+\frac{h}{2}\bigr) + f''\bigl(x-\frac{h}{2}\bigr) = 0, \quad 
%\text{for all} \quad 
x \in \R, h>0.
\end{equation*}
Now letting $h \searrow 0$, we get $f''(x)=0$ for all $x\in\R$, so $f$ is linear and $F$ is a quadratic function, proving the second statement.
\end{proof} 

%After these preparations, we consider the more general case of the functional equation given by \eqref{Cauchy}. 

%------------------------------------------------------------------------------------------------------------------------------------
\section{The Cauchy MVT with fixed mean value}

Let us introduce the sets
\begin{align}\label{U_f,U_g}
U_f := \{ x \in \R: f(x) \ne 0 \}, \quad U_g := \{ x \in \R: g(x) \ne 0 \},
\end{align}
and also their complements $Z_f := \R \setminus U_g$ and $Z_g := \R \setminus U_f$. Observe that if $U_g$ is empty, i.e. $G$ is constant on $\R$, then \eqref{eqn:cauchy.gen} holds for trivial reasons (both sides are identically zero) for arbitrary differentiable function $F$. Of course, we can change the roles of $G$ and $F$ and claim: if $F$ is constant then \eqref{eqn:cauchy.gen} holds for any differentiable function $G$. Assume therefore that $U_g\neq\emptyset$. Then there is a sequence of mutually disjoint open intervals $\{I_\sigma\}_{\sigma\in\Sigma}$, $\Sigma\subset\mathbb{N}$, such that
\begin{align}
\label{rep:U_g=sum.of.itvs}
\displaystyle U_g=\bigcup_{\sigma\in\Sigma}I_{\sigma}.
\end{align}
%
%--------------------------------------------- proposition -------------------------------------------------------------------------
\begin{proposition}
\label{prop:cauchy.gen.U_g<>empty}
If $U_g\neq\emptyset$ but $U_f\cap U_g = \emptyset$, then $U_f=\emptyset$, i.e. $f\equiv 0$ on $\R$ and thus $F$ is constant.
\end{proposition}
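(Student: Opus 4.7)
The plan is to show $F$ is constant on all of $\R$ by starting from a connected component of $U_g$ and propagating constancy outward via the functional equation. First fix one interval $I = (p,q)$ appearing in the decomposition \eqref{rep:U_g=sum.of.itvs}. Since $g$ does not vanish on $I$ and $U_f \cap U_g = \emptyset$, we have $f \equiv 0$ on $I$, so $F \equiv c$ on $I$ for some constant $c$. The crucial observation is that whenever $\alpha a + \beta b \in U_g$, the right-hand side of \eqref{eqn:cauchy.gen} vanishes while $g(\alpha a + \beta b) \neq 0$, forcing $F(a) = F(b)$.

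I would first extend the constancy of $F$ to $(-\infty, p)$. For each $s < p$, the set of $b > s$ with $\alpha s + \beta b \in I$ is the open interval
\[
J(s) = \bigl((p - \alpha s)/\beta,\ (q - \alpha s)/\beta\bigr),
\]
which is contained in $(p, +\infty)$, and the key observation yields $F(b) = F(s)$ for every $b \in J(s)$. Thus $F$ is constant on $J(s)$ with value $F(s)$. Since $J(s)$ has fixed length $(q-p)/\beta$ and its endpoints depend linearly (hence continuously) on $s$, the intervals $J(s)$ and $J(s')$ overlap whenever $|s - s'|$ is small, and this forces $F(s) = F(s')$. By a connectedness/chaining argument, $s \mapsto F(s)$ is constant on $(-\infty, p)$. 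To identify this constant, let $s \nearrow p$: then $J(s)$ meets $I$, where $F \equiv c$, so the constant value is $c$.

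A symmetric argument, this time parametrizing pairs $(a, t)$ with $a < t$ and $\alpha a + \beta t \in I$, yields $F \equiv c$ on $(q, +\infty)$. Combined with $F \equiv c$ on $[p,q]$ (by continuity of $F$), this gives $F \equiv c$ on all of $\R$, hence $f \equiv 0$ and $U_f = \emptyset$. The degenerate cases $p = -\infty$ or $q = +\infty$ only remove one side of the extension and make the argument easier.

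The main obstacle is essentially bookkeeping: correctly parametrizing the family of pairs $(a,b)$ with $\alpha a + \beta b \in I$, and verifying that the sliding intervals $J(s)$ overlap so that the local information $F \equiv F(s)$ on $J(s)$ chains together into a single constant on $(-\infty, p)$. Once this setup is in place, the proof is a short unwinding of the functional equation.
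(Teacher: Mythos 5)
Your argument is correct, and it rests on the same key reduction as the paper's proof: since $f$ vanishes on $U_g$ while $g$ does not, the functional equation forces $F(a)=F(b)$ whenever the mean point $\alpha a+\beta b$ lands in a component $I=(p,q)$ of $U_g$. Where you diverge is in how you propagate constancy from $I$ to all of $\R$. The paper rewrites the relation as $F(y)=F(y-h)$ on the semi-strip $L=\{(h,y): h>0,\ p+\alpha h<y<q+\alpha h\}$ and differentiates: $\partial_y F(y)=-\partial_h F(y)=0$, giving $F$ constant on $(p,\infty)$ in one stroke, and then uses $F(q+\alpha h)=F(q-\beta h)$ to carry the same constant to all $y<q$. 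You instead fix the left argument $s<p$, observe that the admissible right arguments form a sliding window $J(s)$ of fixed length on which $F\equiv F(s)$, and chain overlapping windows by connectedness; the right-hand extension is then immediate because the admissible left arguments for $t>q$ already lie in the region where $F\equiv c$. Your route is entirely derivative-free, so it uses nothing beyond the functional equation and the continuity of $F$ (needed only to pass from $(p,q)$ to $[p,q]$), whereas the paper's trick presupposes that one may differentiate $F$ along the strip; in exchange, the paper's computation settles a whole half-line at once and avoids the bookkeeping of overlapping intervals. Both are complete proofs; yours is arguably the more elementary one.
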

\begin{proof}
By assumption, there is a non-empty interval $(p,q)\subset U_g$ such that $g(x)\neq 0$ on $(p,q)$, but $f(x)=0$ for all $x\in[p,q]$. Then with the change of variables $h=b-a$, $x=\alpha a+\beta b$, \eqref{eqn:cauchy.gen} yields
\begin{equation} \label{constancy} 
F(x+\alpha h) - F(x-\beta h) = 0 \quad \text{for all} \quad  x\in (p,q), \, h>0.
\end{equation} 

Denoting by $y=x+\alpha h$ for $x\in[p,q]$ and $h>0$, we get $F(y)-F(y-h) = 0$ if $(h,y)$ lies within the semi--strip $L := \bigl\{(h,y):\, h>0, \, p+\alpha h<y<q+\alpha h\bigr\}$. 
\begin{figure}[h!]
  \caption{}
  \centering
    \includegraphics[width=0.5\textwidth]{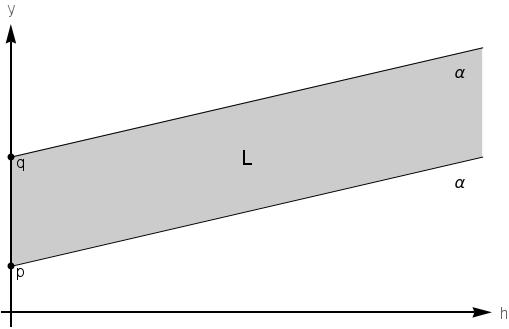}
\end{figure}

\smallskip
\noindent
Then, for $y>p$ choosing $h>0$ such that $(h,y)\in L$, we have
\begin{align*}
\frac{\partial}{\partial y}F(y) = \frac{\partial}{\partial y}F(y-h) &= -\frac{\partial}{\partial h}F(y-h)\\
                                                                   &= -\frac{\partial}{\partial h}F(y) =0, 
                                                                   %\quad \text{in} \quad L,
\end{align*}
so $F(y)$ is a constant, say $F(y)=F\bigl(\frac{p+q}{2}\bigr)$ for $y>p$. However, by \eqref{constancy}, we have $F(q+\alpha h) = F(q-\beta h)$ and thus $F(y)$ is the same constant for all $y<q$. Therefore, $f(y)=F'(y)=0$ for all $y\in\R$.
\end{proof}
%
%----------------------------------------------------------- remark ------------------------------------------------------------------
%\begin{remark}
%\label{rem:U_f=empty}
%If $U_g$ is empty i.e., $G$ is constant, then \eqref{eqn:cauchy.gen} holds for trivial reasons (both sides are identically zero) for any %differentiable function $F$. Of course, we can change the roles of $G$ and $F$ and claim: if $F$ is constant then \eqref{eqn:cauchy.gen} %holds for any differentiable function $G$.
%\end{remark}

Proposition \ref{prop:cauchy.gen.U_g<>empty} shows that the condition $U_f\cap U_g=\emptyset$ holds only if at least one of the sets $U_f$ and $U_g$ is empty. Then we have the simple cases described as in the beginning of the section.

%In the sequel, we analyze the case 
%\begin{align}\label{Uf_disj_Ug}
%U_f\cap U_g \neq \emptyset,
%\end{align}
%i.e., the case when there exist a point $x_0$ - and therefore a non-empty interval $(p,q)\ni x_0$ where both $f$ and $g$ do not vanish:
%\begin{equation}\label{fg_not_0}
%f(x) \neq 0, \, g(x) \neq 0 \quad \text{if} \quad p<x<q.
%\end{equation}
%
%----------------------------------------------------proposition---------------------------------------------------------------------
\begin{proposition} 
\label{prop:X}
%Let $\alpha$ and $\beta$ be fixed positive real numbers with $\alpha+\beta=1$. Let $F,G:\R\to \R$ be 
%%two times \orif{continuously} 
%differentiable functions with derivatives $f=F'$, $g=G'$ such that \eqref{eqn:cauchy.gen} holds for all $a,b\in\R$ with $a<b$. 
%
Let $(F,G)$ be a solution of the Problem~\ref{the-problem} satisfying 
\begin{align}
\label{Uf_disj_Ug}
U_f\cap U_g \neq \emptyset,
\end{align}
and consider the representation \eqref{rep:U_g=sum.of.itvs}. If $\{F, G, 1\}$ are linearly dependent as functions on $I_{\sigma}$ for every $\sigma\in\Sigma$, then $\{F, G, 1\}$ are linearly dependent on $\R$.
\end{proposition}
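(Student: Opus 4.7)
The plan is to isolate one interval $I_{\sigma_0}$ on which the linear dependence is rigid enough to determine an affine relation $G=\lambda F+\mu$, define $H:=G-\lambda F-\mu$, and then propagate $H\equiv 0$ from $I_{\sigma_0}$ to all of $\R$ by iterating the functional equation \eqref{eqn:cauchy.gen}. Concretely, I would fix $x_0\in U_f\cap U_g$ and let $I_{\sigma_0}=(r,s)$ be the component of $U_g$ from \eqref{rep:U_g=sum.of.itvs} containing $x_0$. The hypothesis gives constants $A,B,C$, not all zero, with $AF+BG+C\equiv 0$ on $I_{\sigma_0}$; differentiation yields $Af+Bg\equiv 0$, and evaluation at $x_0$ (where $f(x_0)g(x_0)\neq 0$) forces $A\neq 0$ and $B\neq 0$. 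Normalizing produces $\lambda\neq 0$ and $\mu$ with $G=\lambda F+\mu$ and $g=\lambda f$ on $I_{\sigma_0}$; since $g\neq 0$ on $I_{\sigma_0}\subseteq U_g$, the relation $g=\lambda f$ also gives $f\neq 0$ there, so $I_{\sigma_0}\subseteq U_f$.

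Substituting $g(c)=\lambda f(c)$ into \eqref{eqn:cauchy.gen} with $c=\alpha a+\beta b$ produces $f(c)[H(b)-H(a)]=0$. Since $f(c)\neq 0$ on $I_{\sigma_0}$, this yields the key pointwise identification
\[
H(a)=H(b)\quad\text{whenever}\quad a<b\ \text{and}\ \alpha a+\beta b\in I_{\sigma_0}.
\]

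The main obstacle is to upgrade this identity—which only links pairs straddling $I_{\sigma_0}$—to $H\equiv 0$ on all of $\R$. My plan is to chain two applications of it using auxiliary points $c_1,c_2\in(r,s)$: first identify $y$ with $z_1:=(c_1-\alpha y)/\beta$ (valid when $y<c_1$), then identify $z_2:=(c_2-\beta z_1)/\alpha$ with $z_1$ (valid when $c_2<z_1$). A direct computation yields
\[
z_2=y+\frac{c_2-c_1}{\alpha},
\]
so $H(y)=H\bigl(y+(c_2-c_1)/\alpha\bigr)$. For $y\le(r-\beta s)/\alpha$ both constraints hold uniformly for every $c_1,c_2\in(r,s)$, and $(c_2-c_1)/\alpha$ then sweeps the full open interval $\bigl(-(s-r)/\alpha,(s-r)/\alpha\bigr)$; hence $H$ is constant on $\bigl(y-(s-r)/\alpha,y+(s-r)/\alpha\bigr)$ for each such $y$. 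These intervals overlap consecutively as $y$ varies, and at the threshold $y=(r-\beta s)/\alpha$ the interval has right endpoint $s$ and so meets $I_{\sigma_0}$ where $H=0$; this pins the common constant to $0$. Therefore $H\equiv 0$ on $(-\infty,s)$.

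The mirror chain $y>c_1>z_1$ followed by $z_1<c_2<z_2$ gives the translation $z_2=y+(c_2-c_1)/\beta$, valid uniformly for $y\ge(s-\alpha r)/\beta$, and an entirely analogous argument yields $H\equiv 0$ on $(r,\infty)$. Since $r<s$, the union $(-\infty,s)\cup(r,\infty)$ equals $\R$, so $H\equiv 0$ on $\R$, i.e.\ $G=\lambda F+\mu$ globally---exactly the required linear dependence of $\{1,F,G\}$ on $\R$.
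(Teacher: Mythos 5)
Your route is genuinely different from the paper's, and in its main case it is correct. The paper's proof compares two arbitrary components $I_{\sigma_1},I_{\sigma_2}$ of $U_g$ and uses pairs $(a,b)$ straddling both to force the slopes $A_1=A_2$, hence $f=Ag$ on all of $U_g$; it then repeats the argument with the roles of $F$ and $G$ exchanged to get $g=Kf$ on $U_f$, and finally uses the point of $U_f\cap U_g$ to conclude $AK=1$, $U_f=U_g$, and $f=Ag$ on all of $\R$. You instead anchor at the single component $I_{\sigma_0}=(r,s)$ containing a point of $U_f\cap U_g$ (where, as you correctly argue, the relation must take the form $G=\lambda F+\mu$ with $\lambda\neq0$, so $f\neq0$ there as well) and propagate $H:=G-\lambda F-\mu\equiv0$ outward. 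The key identity $H(a)=H(b)$ whenever $a<b$ and $\alpha a+\beta b\in I_{\sigma_0}$ is right, the composition $z_2=y+(c_2-c_1)/\alpha$ is computed with the correct ordering constraints, and the overlapping-interval argument does yield $H\equiv0$ on $(-\infty,s)$ and on $(r,\infty)$ when $r$ and $s$ are finite. A notable byproduct is that you use the linear-dependence hypothesis on only one component, so you in fact prove a formally stronger statement (consistent with Claim~1 in the paper's proof of Theorem~\ref{thm:sym.cauchy.main}, which establishes a similar propagation in the symmetric case).

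There is one gap: you implicitly assume $I_{\sigma_0}$ is bounded, whereas a component of $U_g$ may be a half-line (the paper explicitly allows $-\infty\le p<q\le+\infty$; cf.\ Proposition~\ref{cauchy1}). If, say, $s=+\infty$, your first chain starts from $y\le(r-\beta s)/\alpha=-\infty$ and your mirror chain from $y\ge(s-\alpha r)/\beta=+\infty$; both starting sets are empty, the two chains establish nothing beyond what is already known on $I_{\sigma_0}$, and the concluding union step fails. The repair is a one-step application of your own key identity: for $I_{\sigma_0}=(r,+\infty)$ and any $y\le r$, choose $b>\max\bigl\{r,\,(r-\alpha y)/\beta\bigr\}$, so that $b\in I_{\sigma_0}$ and $\alpha y+\beta b\in I_{\sigma_0}$, whence $H(y)=H(b)=0$; the case $r=-\infty$ is symmetric, and $I_{\sigma_0}=\R$ is trivial. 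With that boundary case added, your proof is complete.
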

\begin{proof}
For $\sigma_1, \sigma_2 \in \Sigma$ with $\sigma_1 \neq \sigma_2$, consider the intervals $I_{\sigma_1} := (p_1,q_1)$, $I_{\sigma_2}:=(p_2,q_2)$ with 
\begin{align}\label{pqpq}
p_1<q_1\leq p_2<q_2,
\end{align}
and assume that $\{F, G, 1\}$ are linearly dependent on $I_{\sigma_1}$ and $I_{\sigma_2}$. Then it follows that there are constants $A_1$, $A_2$, $B_1$, $B_2 \in\R$ such that 
\begin{align}\label{FA_1B_1}
F(x) &= A_1G(x)+B_1, \, x\in I_{\sigma_1},\\ \label{FA_2B_2}
     &= A_2G(x)+B_2, \, x\in I_{\sigma_2}. 
\end{align}
With the change of variables $h=b-a$, $x=\alpha a+\beta b$, \eqref{eqn:cauchy.gen} yields
\begin{equation*}
[F(x+\alpha h)-F(x-\beta h)]\, g(x) = [G(x+\alpha h)-G(x-\beta h)]\, f(x)
\end{equation*}
for all $x\in\R$ and $h>0$. Since $f(x)=A_2g(x)$ if $x\in I_{\sigma_2}$ by \eqref{FA_2B_2} and $g(x)\neq 0$ for $x\in I_{\sigma_2}$, we have
\begin{align}\label{FFA_2}
F(x+\alpha h)-F(x-\beta h) = A_2[G(x+\alpha h)-G(x-\beta h)], 
%\quad \text{for all} 
\quad x\in I_{\sigma_2}, h>0.
\end{align}
If at the same time $x-\beta h\in I_{\sigma_1}$, then $F(x-\beta h)=A_1G(x-\beta h)+B_1$ by \eqref{FA_1B_1}. Inserting this value into \eqref{FFA_2}, we obtain
\begin{align}\label{FA_2B_1}
F(x+\alpha h) = A_2G(x+\alpha h)+(A_1-A_2)G(x-\beta h)+B_1
\end{align}
for
\begin{align}\label{I^2I^1h}
x\in I_{\sigma_2}, \, x-\beta h\in I_{\sigma_1}, \, h>0.
\end{align}
Put $y=x+\alpha h$, then $x-\beta h=y-h$, and \eqref{I^2I^1h} means that $(h,y)$ lies within the parallelogram $
\Pi := \bigl\{(h,y): \, p_2+\alpha h<y<q_2+\alpha h, \, p_1+h<y<q_1+h\bigr\}$.

\begin{figure}[h!]
  \caption{}
  \centering
    \includegraphics[width=0.5\textwidth]{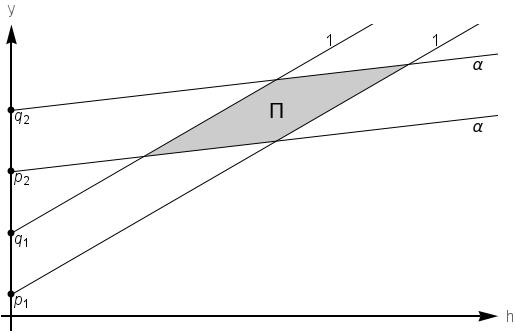}
\end{figure}

\smallskip
\noindent
Since $\beta\in(0,1)$, \eqref{pqpq} guarantees that $\Pi\neq\emptyset$, 
%is a non-empty open set, 
and \eqref{FA_2B_1} implies 
\begin{equation*}
F(y)= A_2G(y)+(A_1-A_2)G(y-h)+B_1 \quad \text{for all} \quad (h,y)\in\Pi.
\end{equation*}
Therefore, at any point of $\Pi$, we have
\begin{align*}
0=\frac{\partial}{\partial h}F(y) = -(A_1-A_2)G'(y-h) = (A_2-A_1)g(y-h).
\end{align*}
But $y-h\in I_{\sigma_1}$ by \eqref{I^2I^1h}, so $g(y-h)\neq 0$ and thus
\begin{align}\label{A_1=A_2}
A_2-A_1=0.
\end{align}
So far our analysis says nothing about $B_1$, $B_2$ in \eqref{FA_1B_1}, \eqref{FA_2B_2} but since $\sigma_1, \sigma_2\in\Sigma$ were arbitrary, \eqref{A_1=A_2} together with \eqref{FA_1B_1}, \eqref{FA_2B_2} imply 
\begin{align}\label{fAg}
f(x)=Ag(x) \quad \text{for some constant} \quad A\in\R \quad \text{and all} \quad x\in U_g.
\end{align}
On the other hand, by changing the roles of $F$ and $G$ in the above analysis, we come to the conclusion that 
\begin{align}\label{gKf}
g(x)=Kf(x) \quad \text{for some constant} \quad K\in\R \quad \text{and all} \quad x\in U_f.
\end{align}
By \eqref{Uf_disj_Ug} there is a point $x_0\in U_g\cap U_f$ so $AK=1$ and these coefficients are not zero. But then \eqref{fAg} implies  $U_g\subset U_f$ and \eqref{gKf} implies $U_f\subset U_g$; therefore, $U_g=U_f$ and $Z_g=Z_f$. The latter means that 
\[
f(x)=Ag(x), \, g(x)=Kf(x) \quad \text{if} \quad x\in Z_g=Z_f
\]
by trivial reasons (all these values are zeros) so with \eqref{fAg} and \eqref{gKf} these identities are valid on the entire $\R=U_f\cup Z_f=U_g\cup Z_g$. 
In particular, it follows that $\{F, G, 1\}$ are linearly dependent on $\R$.
\end{proof}

%---------------------------------------------------------------------------------------------------------------
\section{The Cauchy MVT with fixed asymmetric mean value}

In this section we consider the asymmetric case, i.e., in \eqref{eqn:cauchy.gen} 
%for any $a<b$ 
we take 
\begin{align}\label{c}
\alpha, \, \beta \in (0,1) \quad \text{with } \quad \alpha\neq 1/2 \quad \text{and} \quad \beta=1-\alpha. 
\end{align}

The following proposition describes all pairs $(F,G)$ of two times continuously differentiable functions satisfying \eqref{eqn:cauchy.gen} under the assumption \eqref{c} on $\alpha,\beta$ in the intervals where $g=G'$ does not vanish.

%------------------------------------------------proposition --------------------------------------------------
\begin{proposition}
\label{cauchy1}
Let $(F,G)$ be a solution of the Problem~\ref{the-problem} with $\alpha,\beta$ satisfying \eqref{c} and $I=(p,q)$, $-\infty\leq p<q \leq +\infty$, be an interval where the derivative $g(x)$ does not vanish. If $F,G$ are two times continuously differentiable on $I$, then $\{F, G, 1\}$ are linearly dependent on $I$.
\end{proposition}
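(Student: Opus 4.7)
The plan is to mimic the differentiation-and-pass-to-the-limit strategy used in the proof of Proposition \ref{lagrange}. Fix $x \in I$ and, as in that proof, perform the substitution $h = b - a$, $x = \alpha a + \beta b$ so that $a = x - \beta h$ and $b = x + \alpha h$. For $x \in I$ and $h > 0$ small enough that both $x + \alpha h$ and $x - \beta h$ still lie in $I$, the functional equation \eqref{eqn:cauchy.gen} rewrites as
\begin{equation*}
[F(x+\alpha h) - F(x - \beta h)]\,g(x) = [G(x+\alpha h) - G(x-\beta h)]\,f(x).
\end{equation*}
Here $g(x)$ and $f(x)$ play the role of constants in the variable $h$.

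Next I would differentiate this identity twice with respect to $h$. The $C^2$ hypothesis on $F$ and $G$ over $I$ is exactly what legitimizes this for small $h$. The first differentiation yields a relation of the form $[\alpha f(x+\alpha h) + \beta f(x-\beta h)]\,g(x) = [\alpha g(x+\alpha h) + \beta g(x-\beta h)]\,f(x)$, whose limit as $h\searrow 0$ is the trivial identity $f(x)g(x) = g(x)f(x)$. The second differentiation produces
\begin{equation*}
[\alpha^2 f'(x+\alpha h) - \beta^2 f'(x-\beta h)]\,g(x) = [\alpha^2 g'(x+\alpha h) - \beta^2 g'(x-\beta h)]\,f(x),
\end{equation*}
and this time the passage $h\searrow 0$, justified by the continuity of $f'=F''$ and $g'=G''$ on $I$, produces the nontrivial pointwise identity
\begin{equation*}
(\alpha^2-\beta^2)\,[f'(x)g(x) - g'(x)f(x)] = 0, \qquad x\in I.
\end{equation*}

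The asymmetry assumption \eqref{c} enters exactly here: $\alpha^2 - \beta^2 = \alpha - \beta \neq 0$, so $f'g - g'f \equiv 0$ on $I$. Since $g$ does not vanish on $I$ by hypothesis, this is the Wronskian condition $(f/g)' \equiv 0$, i.e.\ $f = A\,g$ on $I$ for some constant $A \in \R$. A single integration then gives $F = A\,G + B$ on the interval $I$, which is precisely the linear dependence of $\{F,G,1\}$ on $I$.

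In contrast to the symmetric case, there is essentially no main obstacle: the coefficient $\alpha^2-\beta^2$ appearing after two $h$-derivatives is automatically nonzero, so the first two Taylor coefficients at $h=0$ already force $f$ and $g$ to be proportional. (In the symmetric case $\alpha=\beta=1/2$ this coefficient vanishes, which is precisely why Theorem \ref{thm:sym.cauchy.main} requires three derivatives and a considerably more delicate analysis.) The only minor caveat to address in writing up is to note that for a given $x\in I$ one needs $h$ small enough that $[x-\beta h,\,x+\alpha h]\subset I$, so the differentiations above are valid in a half-neighborhood of $h=0$, which is all that is needed for the limit $h\searrow 0$.
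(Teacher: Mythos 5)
Your proposal is correct and follows essentially the same route as the paper: the same change of variables, two differentiations in $h$, passage to $h=0$ using continuity of the second derivatives to obtain $(\alpha^2-\beta^2)\,[f'g-g'f]=0$, and then integration of $(f/g)'=0$ on the interval where $g\neq 0$. No gaps.
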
 
\begin{proof}
With the change of variables $h=b-a$, $x=\alpha a+\beta b$, \eqref{eqn:cauchy.gen} yields
\begin{equation}\label{eqn_asym1}
[F(x+\alpha h)-F(x-\beta h)]\, g(x) = [G(x+\alpha h)-G(x-\beta h)]\, f(x)
\end{equation}
if $x\in I$ and $h>0$ such that $x+\alpha h$, $x-\beta h\in I$. The latter condition tells that \eqref{eqn_asym1} holds if $(h,x)$ lies within the open triangle 
\begin{align}\label{triangleT}
T := \bigl\{(h,x): \,0<h<q-p, p+\beta h<x<q-\alpha h \bigr\}.
\end{align}

\begin{figure}[h!]
  \caption{}
  \centering
    \includegraphics[width=0.5\textwidth]{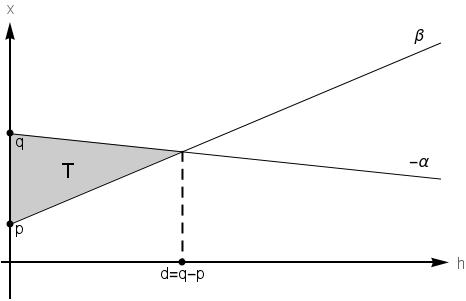}
\end{figure}

By differentiating both sides of \eqref{eqn_asym1} with respect to $h$ twice, we obtain the following relation in $T$
\begin{align}\label{eqn_asym2}
[\alpha^2 f'(x+\alpha h) - \beta^2 f'(x-\beta h)]\,g(x)= [\alpha^2 g(x+\alpha h)-\beta^2 g'(x-\beta h)]\,f(x).
\end{align}
All the functions are continiuous so \eqref{eqn_asym2} holds on the closure $\overline{T}$ as well, in particular, on the interval $\{h=0, p<x<q\}$. Therefore, with $\beta^2-\alpha^2=1-2\alpha\neq 0$ by \eqref{c}, we get $f'(x)g(x)=g'(x)f(x)$ for all $x\in I=(p,q)$. We can divide both sides by $g^2(x)$ and conclude that $(f/g)'=0$ on $I$. This implies that $f/g = A$ for some constant $A\in \R$, and $F'(x)=f(x)=Ag(x)=AG'(x)$, $x\in I$. After integration we get $F(x)=AG(x)+B(x)$, $x\in I$.
\end{proof}

The following theorem is the main result of this section.
%
%--------------------------------------- main theorem 1 ----------------------------------------------------
\begin{theorem} 
\label{thcauchy1}
Let $(F,G)$ be a solution of the Problem~\ref{the-problem} with $\alpha,\beta$ satisfying \eqref{c}. If $F,G$ are two times continuously differentiable on $\R$, then $\{F, G, 1\}$ are linearly dependent on $\R$, i.e. there exist constants $A,B,C\in\R$ such that not all of them zeroes and 
\begin{equation}\label{lin.dep.1}
AF(x)+BG(x)+C=0 \quad \text{for all} \quad x\in\R.
\end{equation}
\end{theorem}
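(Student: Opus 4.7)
The statement is essentially a global assembly of the three preceding results (Propositions \ref{prop:cauchy.gen.U_g<>empty}, \ref{prop:X}, and \ref{cauchy1}), and the proof plan I would follow is to split into cases according to whether the open set $U_g$ is empty or not, and within the nontrivial case, whether $U_f \cap U_g$ is empty or not.

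\textbf{Case 1 ($U_g = \emptyset$).} Then $g \equiv 0$, so $G$ is constant and $\{G,1\}$ is already linearly dependent, hence so is $\{F,G,1\}$. The same reasoning with the roles of $F$ and $G$ swapped handles $U_f = \emptyset$.

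\textbf{Case 2 ($U_g \neq \emptyset$ but $U_f \cap U_g = \emptyset$).} Here Proposition \ref{prop:cauchy.gen.U_g<>empty} applies and gives $f \equiv 0$, so $F$ is constant on $\R$ and $\{F,G,1\}$ is linearly dependent.

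\textbf{Case 3 ($U_f \cap U_g \neq \emptyset$).} This is the main case. Write the decomposition $U_g=\bigcup_{\sigma\in\Sigma} I_\sigma$ as in \eqref{rep:U_g=sum.of.itvs}. Since $F,G$ are $C^2(\R)$, they are in particular $C^2$ on each open interval $I_\sigma\subset U_g$ on which $g$ does not vanish, so Proposition \ref{cauchy1} yields that $\{F,G,1\}$ is linearly dependent on each $I_\sigma$. The global hypothesis \eqref{Uf_disj_Ug} of Proposition \ref{prop:X} is exactly our Case 3 assumption, so that proposition upgrades the piecewise linear dependence on the $I_\sigma$'s to linear dependence of $\{F,G,1\}$ on all of $\R$, producing constants $A,B,C$ (not all zero) with \eqref{lin.dep.1}.

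There is essentially no hard step remaining: the local-to-global passage and the handling of degeneracies on $Z_g$ were already absorbed into Proposition \ref{prop:X}, and the local rigidity on each $I_\sigma$ was absorbed into Proposition \ref{cauchy1}. The only thing to be careful about is bookkeeping: one must note that in Cases 1 and 2 the constants $(A,B,C)$ for \eqref{lin.dep.1} can be taken as $(0,1,-G(0))$ or $(1,0,-F(0))$ respectively, so the conclusion ``not all zero'' is preserved; and that the use of Proposition \ref{prop:X} requires $U_f\cap U_g\neq\emptyset$, which is precisely the hypothesis separating Case 3 from Case 2.
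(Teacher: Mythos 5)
Your proposal is correct and follows exactly the same three-case structure as the paper's proof: $U_g=\emptyset$, then $U_g\neq\emptyset$ with $U_f\cap U_g=\emptyset$ handled by Proposition~\ref{prop:cauchy.gen.U_g<>empty}, then $U_f\cap U_g\neq\emptyset$ handled by combining Proposition~\ref{cauchy1} on each $I_\sigma$ with the local-to-global Proposition~\ref{prop:X}. Your added bookkeeping about the explicit choices of $(A,B,C)$ in the degenerate cases matches what the paper records as well.
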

\begin{proof}
Consider the following cases:

\smallskip
\noindent
Case 1: $U_g = \emptyset$.

In this case $G$ is a constant on $\R$ and \eqref{eqn:cauchy.gen} holds for any differentiable function $F$. Hence \eqref{lin.dep.1} holds, for example, with $A=0$, $B=1$, $C=-G$ and thus $\{F, G, 1\}$ are linearly dependent on $\R$.

\smallskip
\noindent
Case 2: $U_g \neq \emptyset$ but $U_g \cap U_f=\emptyset$.

In this case Proposition \ref{prop:cauchy.gen.U_g<>empty} yields that $F$ is a constant on $\R$ and \eqref{eqn:cauchy.gen} holds for any differentiable function $G$. Hence \eqref{lin.dep.1} holds, for example, with $A=1$, $B=0$, $C=-F$ and thus $\{F, G, 1\}$ are again linearly dependent on $\R$.

\smallskip
\noindent
Case 3: $U_g \cap U_f \neq \emptyset$.

In this case Propositions \ref{cauchy1} and \ref{prop:X} immediately imply that $\{F, G, 1\}$ are linearly dependent on ~$\R$.
%i.e., \eqref{lin.dep.1} holds for some $A,B,C\in\R$. 
\end{proof}
%
%
%------------------------------------------------------------------------------------------------------------------------------
\section{The Cauchy MVT with symmetric mean value} 

In this section we consider the problem of describing all pairs $(F,G)$ of smooth functions for which the mean value in \eqref{eqn:cauchy.gen} is taken at the midpoint of the interval. Our first result gives a necessary (and also sufficient in case $\{1,F,G\}$ are not linearly dependent) condition on such pairs in the intervals where $g=G'$ does not vanish.

\begin{proposition} 
\label{cauchy2}
Assume that $F,G: \R \to \R$ are three times differentiable functions with derivatives $F'=f$, $G'=g$. Let $I\subset \R$ be such an interval that $g\neq 0$ for all $x\in I$ and \eqref{eqn:cauchy.sym1} holds
%
%\begin{equation} 
%\label{eqn:cauchy.sym11}
%[F(b)-F(a)]\,g\bigl(\frac{a+b}{2}\bigr) = [G(b)-G(a)]\,f\bigl(\frac{a+b}{2}\bigr)
%\end{equation}
for all $a,b \in I$. Then there exist constants $A, K\in\R$ and $x_{0}\in I$ such that 
\begin{equation} \label{f,g}
f(x) = \bigg(A + K \int_{x_{0}}^{x}\frac{dt}{g^{2}(t)}\bigg) \, g(x) \quad \text{for all} \quad x \in I.
\end{equation}
%If $K\neq 0$, then  $g:I\to \R$ satisfies the relation 
%\begin{equation} \label{integralcondition}
%\int_{x-h}^{x+h} g(t)\, \Bigg(\int_{x_{0}}^{t}\frac{1}{g^{2}(u)} du \Bigg) \,dt = \Bigg(\int_{x-h}^{x+h} g(t) dt\Bigg) \Bigg(\int_{x_{0}}^{x}\frac{1}{g^{2}(u)} du\Bigg)  \end{equation} 
%for all $  x,h \in \R$ such that $x, x+h, x-h\in I$, if and only if \eqref{eqn:cauchy.sym1} holds.
%
Moreover, if \eqref{f,g} holds with $K\neq0$, then \eqref{eqn:cauchy.sym1} holds if and only if  
\begin{equation} \label{integralcondition}
\int_{x-h}^{x+h} g(t)\, \Bigg(\int_{x_{0}}^{t}\frac{du}{g^{2}(u)} \Bigg) \,dt = \Bigg(\int_{x-h}^{x+h} g(t) dt\Bigg) \Bigg(\int_{x_{0}}^{x}\frac{du}{g^{2}(u)} \Bigg)  \end{equation} 
for all $  x,h \in \R$ such that $x, x+h, x-h\in I$.
\end{proposition}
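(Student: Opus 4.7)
The plan is first to introduce the symmetric variables $x=(a+b)/2$, $h=(b-a)/2$, so that \eqref{eqn:cauchy.sym1} reads
\begin{equation*}
[F(x+h)-F(x-h)]\,g(x) \,=\, [G(x+h)-G(x-h)]\,f(x)
\end{equation*}
for every $x\in I$ and every $h$ with $x\pm h\in I$. To convert this functional identity into a pointwise one, I will Taylor-expand $F$ and $G$ about $x$ to third order in $h$, using the Peano form of Taylor's theorem (which is available since $F,G$ are three times differentiable). The linear-in-$h$ contributions cancel trivially, the quadratic contributions vanish by the $h\mapsto -h$ symmetry, and comparing the coefficients of $h^{3}$ (equivalently, dividing by $h^{3}$ and letting $h\to 0$) yields the key pointwise identity
\begin{equation*}
f''(x)\,g(x) \,=\, f(x)\,g''(x), \qquad x\in I.
\end{equation*}

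The second step is to recognize this identity as saying that the Wronskian of $f$ and $g$ is constant on $I$: since
\[
\frac{d}{dx}\bigl(f'(x)g(x)-f(x)g'(x)\bigr) \,=\, f''(x)g(x)-f(x)g''(x) \,=\, 0,
\]
there is a constant $K\in\R$ with $f'g-fg'\equiv K$ on $I$. Because $g\neq 0$ on $I$ one may rewrite this as $(f/g)'=K/g^{2}$. Fixing any $x_{0}\in I$ and setting $A:=f(x_{0})/g(x_{0})$, integration from $x_{0}$ to $x$ produces \eqref{f,g} and proves the first assertion.

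For the ``moreover'' part I substitute \eqref{f,g} back into the symmetric form above. Writing $F(x+h)-F(x-h)=\int_{x-h}^{x+h}f(t)\,dt$ and similarly for $G$, and cancelling the common nonzero factor $g(x)$, the symmetric form becomes
\begin{align*}
&A\int_{x-h}^{x+h}g(t)\,dt \,+\, K\int_{x-h}^{x+h}g(t)\!\int_{x_{0}}^{t}\!\frac{du}{g^{2}(u)}\,dt \\
&\qquad=\, \Bigl(\int_{x-h}^{x+h}g(t)\,dt\Bigr)\Bigl(A + K\int_{x_{0}}^{x}\frac{du}{g^{2}(u)}\Bigr).
\end{align*}
The $A$-terms cancel on both sides; when $K\neq 0$, dividing the remaining identity through by $K$ gives exactly \eqref{integralcondition}, and the reverse implication is the same computation read backwards.

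The only real technical point is to justify the third-order Taylor expansion from the bare hypothesis of three-times differentiability (without continuity of $F'''$ or $G'''$); this is exactly what the Peano form of Taylor's theorem supplies. Everything else is routine differentiation, integration, and cancellation.
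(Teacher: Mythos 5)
Your proof is correct and follows essentially the same route as the paper: both extract the pointwise identity $f''g=fg''$ from the third-order behaviour in $h$ of the symmetrized equation (the paper by differentiating three times in $h$ and setting $h=0$, you by a third-order Peano--Taylor expansion, which are equivalent here), then integrate the constant Wronskian to get \eqref{f,g}, and verify the equivalence with \eqref{integralcondition} by the same substitution and cancellation of the $A$-terms.
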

\begin{proof} 
With the change of variables $x=\frac{a+b}{2}$, $h=\frac{b-a}{2}$, we can rewrite
%the equation 
\eqref{eqn:cauchy.sym1} as 
\begin{equation} 
\label{modifcauchy}
[F(x+h)-F(x-h)]g(x)= [G(x+h)-G(x-h)]f(x) 
\end{equation}
for all $x,h\in \R$ with the property that $x,x+h,x-h \in I$. By differentiating this equality three times with respect to $h$, we get
\begin{equation*}
[f''(x+h)+f''(x-h)]\,g(x) = [g''(x+h)+g''(x-h)]\,f(x).
\end{equation*}
Setting $h=0$, we obtain
\[
0=f''(x)g(x)-f(x)g''(x) = \bigl(f'(x)g(x)-f(x)g'(x)\bigr)' \quad \text{for all} \quad x\in I,
\]
and thus $f'(x)g(x)-f(x)g'(x)=K$ for some constant $K$. Then $\bigl(\frac{f}{g}(x)\bigr)'= \frac{K}{g^2(x)}$, $x \in I$, 
and integration over $(x_0,x)$ with any $x_0\in I$ yields \eqref{f,g}.

Now assume \eqref{f,g} holds with a nonzero constant $K$. Then we have
\begin{align}
\nonumber
F(x+h) - F(x-h) &= \int_{x-h}^{x+h}f(t) dt \\ \nonumber
                &= \int_{x-h}^{x+h} \bigg(A + K\int_{x_0}^t \frac{du}{g^2(u)} \bigg)\,g(t) dt\\ \label{FCT_comp1}
                &= A\int_{x-h}^{x+h} g(t) dt + K\int_{x-h}^{x+h} g(t)\bigg(\int_{x_0}^t \frac{du}{g^2(u)} \bigg) dt
\end{align}
and
\begin{align}
\nonumber
[G(x+h)-G(x-h)]\frac{f(x)}{g(x)} =& \Bigg(\int_{x-h}^{x+h} g(t) dt\Bigg) \Bigg(A + K\int_{x_0}^x \frac{du}{g^2(u)}\Bigg)\\\label{FCT_comp2}
                                 =& A\int_{x-h}^{x+h} g(t) dt + K\Bigg(\int_{x-h}^{x+h} g(t) dt\Bigg) \Bigg(\int_{x_{0}}^{x}\frac{du}{g^{2}(u)} \Bigg).
\end{align}
By comparing \eqref{FCT_comp1} and \eqref{FCT_comp2}, it is easy to see that \eqref{integralcondition} is equivalent to \eqref{eqn:cauchy.sym1}.
\end{proof} 
% ---------------------------------------------------- example ----------------------------------------------
%In the following, we shall give a full characterization of the pairs $(F,G)$ satisfying \eqref{eqn:cauchy.sym1} on $\R$. 
The following example illustrates that there are non-trivial functions satisfying \eqref{integralcondition} (and hence \eqref{eqn:cauchy.sym1}) on $\R$. 
\begin{example}
Consider $g(t)= e^t$ on $I=\R$ and let $A=0, K=1, x_0=0$. The integral condition \eqref{integralcondition} reads as the following identity
\begin{equation*}
\int_{x-h}^{x+h} e^t \, \Bigg(\int_{0}^te^{-2u} du\Bigg) \,dt= \Bigg(\int_{x-h}^{x+h}e^t dt\Bigg)\Bigg(\int_{0}^x e^{-2u}du\Bigg).
\end{equation*}
%which can be easily checked. 
A direct computation gives $f(x) = \sinh(x)=\frac{e^x-e^{-x}}{2}$, and consequently,
\begin{align}\label{F=cosh}
F(x) = \cosh(x) = \frac{e^x+e^{-x}}{2}, \quad G(x) = e^x, \, x\in\R.
\end{align}
\end{example} 

We invite the interested reader to verify directly that the pair $(F,G)$ in \eqref{F=cosh} satisfies the relation \eqref{eqn:cauchy.sym1},  giving a non--trivial example of such pairs. 

Now we assume that $K\neq 0$ and analyze the property \eqref{integralcondition} for all $x,h\in\R$ such that $x,x+h,x-h\in I$. Differentiating it with respect to $h$, we obtain 
\[
g(x+h)\int_{x_0}^{x+h}\frac{du}{g^2(u)} + g(x-h)\int_{x_0}^{x-h}\frac{du}{g^2(u)} = [g(x+h)+g(x-h)]\int_{x_0}^x\frac{du}{g^2(u)}.
\]
Differentiation two more times with respect to $h$ gives
\[
g''(x+h)\int_{x_0}^{x+h}\frac{du}{g^2(u)} + g''(x-h)\int_{x_0}^{x-h}\frac{du}{g^2(u)} = [g''(x+h)+g''(x-h)]\int_{x_0}^x\frac{du}{g^2(u)},
\]
for all $x\in I$ and $h\in\R$ such that $x,x+h,x-h\in I$. Setting $h=x-x_0$ in these two equations, we obtain
\begin{align} \label{eqn_for_g}
g(2x-x_0)\int_{x_0}^{2x-x_0}\frac{du}{g^2(u)}  = [g(2x-x_0)+g(x_0)]\int_{x_0}^x\frac{du}{g^2(u)},
\end{align}
and
\begin{align} \label{eqn_for_g''}
g''(2x-x_0)\int_{x_0}^{2x-x_0}\frac{du}{g^2(u)} = [g''(2x-x_0)+g''(x_0)]\int_{x_0}^x\frac{du}{g^2(u)},
\end{align}
for all $x\in I$ with $2x-x_0 \in I$. Since $2x-x_0\in I$ and $g$ has no zeros in $I$, both sides of \eqref{eqn_for_g} do not vanish. By comparing \eqref{eqn_for_g''} and \eqref{eqn_for_g}, we get 
\begin{align}\label{frac_eq_g''}
\frac{g''(2x-x_0)}{g(2x-x_0)} = \frac{g''(2x-x_0)+g''(x_0)}{g(2x-x_0)+g(x_0)}
\end{align}
for all $x\in I$ such that $2x-x_0 \in I$. Denoting by $y(x) := g(2x-x_0)$ and $\lambda := \frac{4g''(x_0)}{g(x_0)}$, \eqref{frac_eq_g''} yields the second order differential equation $y''-\lambda y=0$, whose general real-valued solution (depending on the sign of $\lambda$), has the following form
\begin{align*}
g(x) &= Px+Q, \quad &&\text{if} \quad \lambda=0;\\
g(x) &= Pe^{\sqrt{\lambda} x}+Qe^{-\sqrt{\lambda} x} \quad &&\text{if} \quad \lambda=\mu^2, \mu>0;\\
g(x) &= P\sin(\sqrt{-\lambda} x)+Q\cos(\sqrt{-\lambda} x) \quad &&\text{if} \quad \lambda=-\mu^2, \mu>0,
\end{align*}
where $P$, $Q$ are real constants. Hence $G$ has one of the following forms
\begin{align}\label{G_quad}
G(x)&=Ax^2+Bx+C,\\\label{G_exp}
G(x)&=Ae^{\mu x}+Be^{-\mu x}+C, \quad \mu>0,\\\label{G_trig}
G(x)&=A\sin(\mu x)+B\cos(\mu x)+C, \quad \mu>0,
\end{align}
where $A,B,C$ are real constants. 
%From \eqref{f,g} it follows that on $I$ the function $F$ has the same type as $G$.
%
%----------------------------------------------- conclusion/remark -----------------------------------------------------
%

\begin{remark}
\label{rem:cauchy.sym}
Altogether, we come to the following conclusion:
%Proposition \ref{cauchy2} imples that 
on every interval $I\subset\R$ on which $G'\neq 0$, either $\{F, G, 1\}$ are linearly dependent, or $G$ and thus also $F$, cf. \eqref{f,g}, has one of the forms described in \eqref{G_quad}--\eqref{G_trig}.
\end{remark}
In the sequel, we call a function $G$ (resp. the pair $(F,G)$) to be of \textit{quadratic, exponential} or \textit{trigonometric type on $I$} if $G$ has (resp. both of $F$ and $G$ have) the form \eqref{G_quad}, \eqref{G_exp} or \eqref{G_trig}, respectively.
%
%Next step is to characterize all three times differentiable functions $F,G: \R \to \R$ with derivatives $F'=f$ and $G'=g$ such that 
%\begin{equation}
%\label{eqn:cauchy.sym}
% [F(b)-F(a)]\,g\bigl(\frac{a+b}{2}\bigr) = [G(b)-G(a)]\,f\bigl(\frac{a+b}{2}\bigr) \quad \text{for all} \quad a,b\in \R.
%\end{equation}

%Recall 
Consider the set $U_g$ and its representation, cf. \eqref{U_f,U_g}, \eqref{rep:U_g=sum.of.itvs}. The following lemma plays a crucial role in the analysis of the equation \eqref{eqn:cauchy.sym1}.

%----------------------------------------------- lemma ----------------------------------------------------------
%
\begin{lemma}
\label{lem:cauchy.sym.f(q)=0}
Let $(p,q)\in\{I_{\sigma}\}_{\sigma\in\Sigma}$ be such that $p>-\infty$ and $f(p)=0$. 
%Then $F$ and $G$ are in linear relationship on the interval $(p,q)$.
Then $\{F, G, 1\}$ are linearly dependent on $[p,q)$.
\end{lemma}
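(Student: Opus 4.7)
My plan is to show that the Wronskian-type constant $K$ produced in the proof of Proposition~\ref{cauchy2} must be zero, and then to read off the linear dependence directly from $K=0$.

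The first step I would carry out is the purely set-theoretic observation that $g(p)=0$. Indeed, $(p,q)$ is one of the pairwise disjoint maximal open components of $U_g$ in \eqref{rep:U_g=sum.of.itvs}, and $p>-\infty$ is a real number; if $p$ belonged to another $I_{\sigma'}$ the two intervals would overlap, contradicting disjointness, so $p\in Z_g$ and $g(p)=0$ by continuity of $g$. Combined with the hypothesis $f(p)=0$, this means $f$ and $g$ vanish simultaneously at the left endpoint $p$.

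Next I would revisit the intermediate step in the proof of Proposition~\ref{cauchy2}, which, before exhibiting the representation \eqref{f,g}, established the constancy of the Wronskian:
\[
f'(x)g(x)-f(x)g'(x)=K,\qquad x\in(p,q),
\]
for some $K\in\R$. Because $F,G$ are three times differentiable on all of $\R$, the functions $f,f',g,g'$ are continuous at $p$, so I can pass to the limit $x\to p^{+}$ and use $f(p)=g(p)=0$ to conclude
\[
K=f'(p)\cdot 0-0\cdot g'(p)=0.
\]
With $K=0$, the identity $(f/g)'=K/g^{2}$ from Proposition~\ref{cauchy2} gives $(f/g)'\equiv 0$ on $(p,q)$, hence $f=Ag$ there for some constant $A$, and integrating yields $F=AG+C$ on $(p,q)$ for some constant $C$. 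Continuity of $F$ and $G$ at $p$ then extends this relation to $[p,q)$, which is exactly the desired linear dependence of $\{F,G,1\}$.

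The only delicate point is the very first one, extracting $g(p)=0$ from the structural description of $U_g$; the rest is the clean observation that a constant Wronskian whose two factors both vanish at a boundary point must itself be zero, after which everything follows by a one-line integration and a continuity argument. I do not expect any genuine obstacle — the three-times-differentiability is used precisely to make the Wronskian continuous up to $p$, and the choice to work in $[p,q)$ rather than $(p,q)$ is exactly what a continuity extension provides.
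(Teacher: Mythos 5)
Your proof is correct, and it takes a genuinely different and more direct route than the paper. The paper's proof invokes the classification of Remark~\ref{rem:cauchy.sym} and runs a case analysis: if $(F,G)$ is of quadratic, exponential, or trigonometric type on $(p,q)$, the boundary conditions $f(p)=g(p)=0$ pin both $F$ and $G$ down to multiples of a single function ($(x-p)^2$, $\cosh(\mu(x-p))$, or $\cos(\mu(x-p))$, up to additive constants), whence the linear dependence. You instead bypass the classification entirely: you take the constant Wronskian $f'g-fg'=K$ established in the proof of Proposition~\ref{cauchy2}, let $x\to p^{+}$ (legitimate, since three-times differentiability of $F,G$ makes $f'$ and $g'$ continuous on $\R$), and conclude $K=0$ from $f(p)=g(p)=0$; then $(f/g)'=K/g^2\equiv 0$ gives $f=Ag$ on $(p,q)$ and integration plus continuity finishes the job on $[p,q)$. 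Your observation that $g(p)=0$ follows from disjointness of the components of $U_g$ matches what the paper asserts without elaboration. What your argument buys is brevity and the elimination of the case analysis; what the paper's version buys is that the explicit normal forms $\cosh(\mu(x-p))$ and $\cos(\mu(x-p))$ obtained in its Case~2 are reused later (the proof of Claim~2 in Theorem~\ref{thm:sym.cauchy.main} cites "as in the proof of Lemma~\ref{lem:cauchy.sym.f(q)=0}" to get the form \eqref{G_in_(p,q)}), so your shortcut would require that derivation to be supplied elsewhere. As a standalone proof of the lemma, yours is complete and arguably cleaner.
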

\begin{proof}
$g(p)=0$ by Definition \eqref{rep:U_g=sum.of.itvs} so by Remark \ref{rem:cauchy.sym}, it is sufficient to consider the following cases.

\smallskip
\noindent
Case 1: $G$ is of quadratic type on $(p,q)$.  

Then $F$ is also of quadratic type on $(p,q)$, and since $f(p)=g(p)=0$, we have $F,G\in\text{span}\bigl\{1, (x-p)^2\bigr\}$. Thus $\{F, G, 1\}$ are linearly dependent on $(p,q)$. 

\smallskip
\noindent
Case 2: $G$ is of either exponential or trigonometric type on $(p,q)$.

First suppose that $G$ is of exponential type on $(p,q)$. Then so is $F$ and since the set of functions satisfying \eqref{eqn:cauchy.sym1} is invariant with respect to the addition of constant functions, we can assume, without loss of generality, that $F,G \in \text{span} \bigl\{e^{\,\mu (x-p)}, e^{-\mu (x-p)}\bigr\}$ for some $\mu \neq 0$. Hence there are real constants $u$, $v$ such that $F(x) = ue^{\mu(x-p)} + ve^{-\mu(x-p)}$, $x\in (p,q)$. Since $F'(p)=f(p)=0$, we get $u=v$ and thus $F(x)=2u\cosh(\mu(x-p))$. The same argument for $G$ explains that $G(x)=2w\cosh(\mu(x-p))$ for some real $w$, and consequently $F$ and $G$ are multiples of the same function $\cosh(\mu(x-p))$.

If $G$ is of trigonometric type, then by the same way as above, we can conclude that $F$ and $G$ are multiples of the same function $\cos(\mu(x-p))$, implying that $\{F, G, 1\}$ are linearly dependent on $[p,q)$.
%
%
%
%Let $F(q)=F_0$ and $G(q)=G_0$. Then $G$ and $F$ respectively satisfies the following initial value problems
%%
%\begin{align*}
%\begin{cases}
%y''(x)-\mu y(x) = 0, \; x\in (p,q),\\
%               y(p) = G_0, \; y'(p) = 0,
%\end{cases}
%\quad \text{and} \quad
%\begin{cases}
%y''(x)-\mu y(x) = 0, \; x\in (p,q),\\
%               y(p) = F_0, \; y'(p) = 0,
%\end{cases}
%\end{align*}
%%
%Consider the function 
%%
%\begin{align}\label{phi}
%\phi(x):=G_0F(x)-F_0G(x), \quad x\in [p,q).
%\end{align}
%Then clearly, $\phi(q)=\phi'(q)=0$ and  $\phi''(x)-\mu\phi(x) = 0$ on $(p,q)$; so by the uniqueness of solution of ODE with initial value, we get $\phi\equiv 0$ on $[p,q)$. Thus \eqref{phi} yields a linear relationship between $F$ and $G$ on $[p,q)$. 
%
%
\end{proof}
%
%------------------------------------------- Theorem -------------------------------------------------------------------
%

\paragraph{Proof of Theorem~\ref{thm:sym.cauchy.main}} Consider the set $U_g$ defined in \eqref{U_f,U_g}.
%, which is an open subset of the real line. 
If 
%this is the case that 
$U_g=\emptyset$, then $g\equiv 0$ on $\R$, and thus $G$ is identically constant on $\R$. In this case $F$ can be an arbitrary differentiable function on $\R$ and thus $\{1, F, G\}$ are linearly dependent on $\R$. If $U_g=\R$, then it follows (cf. Remark \ref{rem:cauchy.sym}) that either $\{1, F, G\}$ are linearly dependent
%on $\R$ 
or $G$ has one of the forms \eqref{G_quad}--\eqref{G_trig} on the whole of $\R$. Moreover, we get the same conclusion if $U_g\cap U_f=\emptyset$ (cf. Proposition \ref{prop:cauchy.gen.U_g<>empty}). 

Next, let us assume that $U_g\cap U_f \neq \emptyset$ and $U_g$ is a proper subset of $\R$. Consider the representation ~\eqref{rep:U_g=sum.of.itvs}. It is clear (cf. Remark \ref{rem:cauchy.sym}) that the index set $\Sigma$ can be split into disjoint subsets as $\Sigma = \Sigma_{\textrm{lr}} \cup \Sigma_{\textrm{q}} \cup \Sigma_{\textrm{t}} \cup \Sigma_{\textrm{e}}$, where
\begin{align*}
\Sigma_{\textrm{lr}} :=& \, \bigl\{ \sigma\in\Sigma: \, \{F, G, 1\} \quad \text{are in linear relationship on} \; I_{\sigma} \bigr\},\\
\Sigma_{\textrm{q}} :=& \, \bigl\{\sigma\in\Sigma: \, (F, G) \quad \text{are of quadratic type on } \,I_{\sigma} \bigr\},\\
\Sigma_{\textrm{t}} :=& \, \bigl\{\sigma\in\Sigma: \, (F, G) \quad \text{are of trigonometric type on } \, I_{\sigma} \bigr\},\\
\Sigma_{\textrm{e}} :=& \, \bigl\{\sigma\in\Sigma: \, (F, G) \quad \text{are of exponential type on } \, I_{\sigma}\bigr\}.
\end{align*}

\smallskip
\noindent
\textit{Claim 1}. If $\Sigma_{\textrm{lr}} \neq \emptyset$, then $\Sigma_{\textrm{lr}} = \Sigma$.

\smallskip
\noindent
Proof. Assume $\Sigma_{\textrm{lr}}$ is a proper subest of $\Sigma$. Then there exists $\sigma_2\in\Sigma$ such that $\sigma_2\notin\Sigma_{\textrm{lr}}$. Since $\Sigma_{\textrm{lr}} \neq \emptyset$, there is $\sigma_1 \in \Sigma_{\textrm{lr}}$ and $A_1\in\R$ such that $f(x)=A_1g(x)$ on $x\in I_{\sigma_1}$. Consider all $x,h\in\R$ such that $x+h \in I_{\sigma_2}$ and $x\in I_{\sigma_1}$. Using \eqref{eqn:cauchy.sym1} for $a=x-h$ and $b=x+h$, and recalling that $g\neq 0$ on $I_{\sigma_1}$, we get
\begin{align}
F(x+h)-A_1G(x+h) = F(x-h) - A_1G(x-h).
\end{align}
Therefore, 
%it is not difficult to see that 
\begin{align*}
f(x+h) - A_1g(x+h) &= \frac{1}{2}\bigg(\frac{\partial}{\partial x} + \frac{\partial}{\partial h}\bigg) \, F(x+h) - \frac{A_1}{2}\bigg(\frac{\partial}{\partial x} + \frac{\partial}{\partial h}\bigg) \, G(x+h)\\
                   &= \frac{1}{2}\bigg(\frac{\partial}{\partial x} + \frac{\partial}{\partial h}\bigg) \bigl(F(x-h) - A_1G(x-h)\bigr) \\
                   &= 0,
\end{align*}
and thus $f(x+h) = A_1g(x+h)$ for all $x,h\in\R$ such that $x+h \in I_{\sigma_2}$ and $x\in I_{\sigma_1}$. From this it follows that $F$ and $G$ are in linear relationship on $I_{\sigma_2}$, that is, $\sigma_2 \in \Sigma_{\textrm{lr}}$, which leads to a contradiction. \qed

\smallskip
\noindent
\textit{Claim 2}. If $\Sigma_{\textrm{lr}} = \emptyset$, then only one of the index sets  $\Sigma_{\textrm{q}}$, $\Sigma_{\textrm{t}}$, $\Sigma_{\textrm{e}}$ is non-empty. 

\smallskip
\noindent
Proof. Let $\sigma\in \Sigma$ and $I_{\sigma} = (p,q)$. Since $U_g$ is proper subset of $\R$, one of $p,q$ is finite. We can assume $p>-\infty$. Then $g(p)=0$, and Lemma \ref{lem:cauchy.sym.f(q)=0} yields $f(p)\neq 0$. Hence using \eqref{eqn:cauchy.sym1} for $a=p-h$ and $b=p+h$ we get 
\begin{align}
\label{eqn:G(p+h)=G(p-h)}
G(p+h)=G(p-h) \quad \text{for all} \quad h\in\R,
\end{align}
%which means that 
so the graph of $G$ is symmetric with respect to the vertical line $y=p$. 

If $\sigma\in\Sigma_{\textrm{q}}$ or $\sigma\in\Sigma_{\textrm{e}}$, then $q=+\infty$ since the functions of quadratic type has exactly one and the functions of exponential type has at most one critical point. Therefore, if $\sigma\in\Sigma_{\textrm{q}}$, then 
$G \in \text{span} \{1, (x-p)^2\}$, $x\in\R$ and $\Sigma = \Sigma_{\textrm{q}}$. Similarly, it follows from \eqref{eqn:G(p+h)=G(p-h)} that if $\sigma\in\Sigma_{\textrm{e}}$, then $\Sigma = \Sigma_{\textrm{e}}$. 

Next, assume $\Sigma_{\textrm{lr}} = \Sigma_{\textrm{q}} = \Sigma_{\textrm{e}} = \emptyset$. Then $\Sigma = \Sigma_{\textrm{t}}$ and let $\sigma\in\Sigma_{\textrm{t}}$. Since $G$ is of trigonometric type on $I_{\sigma}=(p,q)$, we must have $q<+\infty$. So $g(p)=g(q)=0$ and it follows as in the proof of Lemma \ref{lem:cauchy.sym.f(q)=0} that there are real constants $u$, $v$ such that  
\begin{equation}\label{G_in_(p,q)}
G(x) = u+v\cos\bigl(\pi\frac{x-p}{q-p}\bigr), \quad x\in (p,q).
\end{equation}
%
%
%and since $g=G'$ has no zeros in $(p,q)$, it follows
%%\footnote{do we have to explain why this is true?} 
%that $G \in \text{span} \bigg\{1,\sin\bigl(\frac{\pi x}{q-p}\bigr),\cos\bigl(\frac{\pi x}{q-p}\bigr)\bigg\}$, $x\in (p,q)$. 
%It is not difficult to see from \eqref{eqn:G(p+h)=G(p-h)} that the set of extrema of $G$ is $\{q+m(q-p) \; | \; m\in\Z\} = Z_f$. Considering \eqref{eqn:G(p+h)=G(p-h)} for every extrema $p$, one concludes that $G$ is $2(q-p)$ periodic function of trigonometric type on $\R$, that is, there are constants $A,B,C\in\R$ such that $G(x)=A\sin\bigl(\frac{\pi x}{q-p}\bigr)+B\cos\bigl(\frac{\pi x}{q-p}\bigr)+C$ for all $x\in\R$. In particular, $\Sigma = \Sigma_{\textrm{t}}$. \qedhere
%
%
Using \eqref{eqn:G(p+h)=G(p-h)} we obtain that \eqref{G_in_(p,q)} holds on the whole of $\R$. \qed
%
%
%
%In fact, $Z_f=\{q+m(q-p) \; | \; m\in\Z\}$. 

Since $U_g\neq\emptyset$, at least one of $\Sigma_{\textrm{lr}}$, $\Sigma_{\textrm{q}}$, $\Sigma_{\textrm{t}}$, $\Sigma_{\textrm{e}}$ is non-empty. If $\Sigma_{\textrm{lr}} \neq \emptyset$, then Claim 1 and Proposition \ref{prop:X} imply that $\{F, G, 1\}$ are linearly dependent on $\R$. If $\Sigma_{\textrm{lr}} = \emptyset$, then Claim 2 yields that one of the possibilities $(b)$ -- $(d)$ holds. \qed

\section{Final Remarks}

As a consequence of our main result we can give a partial answer to following  still open question of Sahoo and Riedel (cf. \cite[Section 2.7]{Sahoo-Riedel} for an equivalent formulation). 

\smallskip

\paragraph{Problem} \textit{Find all functions $F,G,\phi,\psi:\R \to \R$ satisfying 
%the functional equation
\begin{equation}
\label{op:Sahoo-Riedel}
[F(x)-F(y)]\,\phi\bigl(\frac{x+y}{2}\bigr) = [G(x)-G(y)]\,\psi\bigl(\frac{x+y}{2}\bigr) 
\end{equation}
for all $x,y\in\R$.}

We provide a partial result to this problem under certain assumptions on the unknown functions. First let us make the change of variables $s=\frac{x+y}{2}$, $t=\frac{x-y}{2}$ and write \eqref{op:Sahoo-Riedel} equivalently as
\begin{equation}
\label{op:Sahoo-Riedel1}
[F(s+t)-F(s-t)]\,\phi(s) = [G(s+t)-G(s-t)]\,\psi(s), \quad s,t\in\R.
\end{equation}

%The following is the main result of this section.

\begin{theorem}
Let $F,G:\R\to\R$ be three times differentiable and $\phi,\psi:\R\to\R$ be an arbitrary functions satisfying \eqref{op:Sahoo-Riedel1} on $\R$. If either $\phi\neq0$ or $\psi\neq0$ on $\R$, then one of the following possibilities holds:
\begin{enumerate}[(a)]
\item there exist constants $A_0,A_1,A_2\in\R$ such that for all $s\in\R$, we have $A_0+A_1F(s)+A_2G(s)=0$ and $G'(s)\,[A_1\psi(s)+A_2\phi(s)]=0$;
\item there exist constants $A_0,A_1,A_2,B_0,B_1,B_2\in\R$ such that for all $s\in\R$, we have $F(s)=A_0+A_1s^2+A_2s^2$, $G(s)=B_0+B_1s+B_2s^2$ and 
\[
(A_1+2A_2s)\phi(s)=(B_1+2B_2s)\psi(s);
%\quad \text{for all} \quad s\in\R;
\]
\item there exists $\mu\neq0$ and constants $A_0,A_1,A_2,B_0,B_1,B_2\in\R$ such that for all $s\in\R$, we have $F(s)=A_0+A_1e^{\mu s}+A_2e^{-\mu s}$, $G(s)=B_0+B_1e^{\mu s}+B_2e^{-\mu s}$ and 
\[
(A_1e^{\mu s}-A_2e^{-\mu s})\phi(s) = (B_1e^{\mu s}-B_2e^{-\mu s})\psi(s); 
%\quad \text{for all} \quad s\in\R;
\]
\item there exists $\mu\neq0$ and constants $A_0,A_1,A_2,B_0,B_1,B_2\in\R$ such that for all $s\in\R$, we have $F(s)=A_0+A_1\sin(\mu s)+A_2\cos(\mu s)$, $G(s)=B_0+B_1\sin(\mu s)+B_2\cos(\mu s)$ and 
\[
[A_1\cos(\mu s)-A_2\sin(\mu s)]\,\phi(s) = [B_1\cos(\mu s)-B_2\sin(\mu s)]\,\psi(s). 
%\quad \text{for all} \quad s\in\R;
\]
\end{enumerate} 
\end{theorem}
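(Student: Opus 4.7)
The plan is to reduce the functional equation \eqref{op:Sahoo-Riedel1} to the symmetric Cauchy MVT equation \eqref{eqn:cauchy.sym1}, so that Theorem~\ref{thm:sym.cauchy.main} can be applied directly to the pair $(F,G)$; the auxiliary identities between $\phi$ and $\psi$ listed in conclusions (a)--(d) will drop out of the reduction as byproducts.

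The first step would be to differentiate \eqref{op:Sahoo-Riedel1} once with respect to $t$, which is legal since $\phi,\psi$ are independent of $t$ and $F,G$ are three times differentiable, and then set $t=0$. This produces the pointwise identity
\begin{equation*}
f(s)\,\phi(s) = g(s)\,\psi(s), \qquad s\in\R.
\end{equation*}
Next I would exploit this identity algebraically: multiplying \eqref{op:Sahoo-Riedel1} first by $f(s)$ and then by $g(s)$ and substituting $f\phi=g\psi$ appropriately in each, both manipulations collapse to
\begin{equation*}
\psi(s)\,\Delta(s,t) = 0 \qquad \text{and} \qquad \phi(s)\,\Delta(s,t) = 0,
\end{equation*}
where $\Delta(s,t) := g(s)[F(s+t)-F(s-t)] - f(s)[G(s+t)-G(s-t)]$. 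Under the hypothesis $(\phi(s),\psi(s))\neq(0,0)$ for every $s\in\R$, this forces $\Delta\equiv 0$ on $\R\times\R$, which in the variables $a=s-t$, $b=s+t$ is exactly the Cauchy MVT equation \eqref{eqn:cauchy.sym1}.

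At this point Theorem~\ref{thm:sym.cauchy.main} constrains $(F,G)$ to one of its four structural families, giving the ``$F,G$'' portions of conclusions (a)--(d). The matching ``$\phi,\psi$'' portions then follow by substituting the explicit forms of $f$ and $g$ into the identity $f\phi=g\psi$: in the quadratic, exponential and trigonometric cases this directly produces the displayed relations between $\phi$ and $\psi$, while in the linearly dependent case, differentiating $A_0+A_1F+A_2G=0$ gives $A_1f+A_2g=0$, and combining this with $f\phi=g\psi$ yields
\begin{equation*}
g(s)[A_1\psi(s)+A_2\phi(s)] = A_1f(s)\phi(s)+A_2g(s)\phi(s) = \phi(s)[A_1f(s)+A_2g(s)] = 0.
\end{equation*}

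The main obstacle is the complete lack of regularity on $\phi$ and $\psi$: they are not assumed continuous, let alone measurable, so no density or approximation argument is available to extend pointwise relations from a subset of $\R$ to all of $\R$. This is precisely why the hypothesis has to be read as saying that $(\phi(s),\psi(s))\neq(0,0)$ at every single $s\in\R$; this is the minimal input that turns the two factored equations $\psi\Delta=0=\phi\Delta$ into the unconditional vanishing of $\Delta$, with no recourse to the zero sets of $\phi$ and $\psi$ themselves.
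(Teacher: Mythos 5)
Your proposal is correct and follows the same overall strategy as the paper: differentiate \eqref{op:Sahoo-Riedel1} in $t$ at $t=0$ to obtain $f\phi=g\psi$, use this to eliminate $\phi,\psi$ and reduce to the symmetric Cauchy equation \eqref{eqn:cauchy.sym1}, apply Theorem~\ref{thm:sym.cauchy.main}, and read off the $\phi,\psi$ relations from $f\phi=g\psi$ (your case (a) computation matches the paper's stated conclusion exactly). The one genuine difference is the middle step. The paper reads the hypothesis as ``one of $\phi,\psi$ is nowhere zero,'' assumes WLOG it is $\phi$, divides \eqref{op:Sahoo-Riedel1} by $\phi(s)$ on $U_g$ to get $[F(s+t)-F(s-t)]g(s)=[G(s+t)-G(s-t)]f(s)$ there, and then extends to $Z_g$ by observing that $f\phi=g\psi$ with $\phi\neq0$ forces $Z_g\subset Z_f$, so both sides vanish on $Z_g$. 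You instead multiply \eqref{op:Sahoo-Riedel1} through by $g(s)$ and by $f(s)$ and substitute $f\phi=g\psi$ to factor the equation as $\phi(s)\Delta(s,t)=0$ and $\psi(s)\Delta(s,t)=0$, which kills $\Delta$ pointwise under the strictly weaker hypothesis that $\phi(s)$ and $\psi(s)$ never vanish simultaneously. Your version avoids the $U_g$/$Z_g$ case split entirely and proves a slightly more general statement; the paper's version is the natural one under its stronger reading of the hypothesis. Both arguments are sound, and your handling of the total absence of regularity on $\phi,\psi$ (everything is pointwise algebra) is exactly right.
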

\begin{proof}
Let $f,g$ be the derivatives of $F,G$, respectively and the sets $U_g$, $U_f$ (resp. $Z_g$, $Z_f$) be defined as in Section 3. Without loss of generality, assume that $\phi$ does not vanish on $\R$. By differentiating \eqref{op:Sahoo-Riedel1} with respect to $t$ and setting $t=0$ in the resulting equation, we get 
\begin{equation}
\label{op1}
f(s)\phi(s) = g(s)\psi(s), \quad s\in\R.
\end{equation}
For any $s\in U_g$ and $t\in\R$, by \eqref{op:Sahoo-Riedel1} and \eqref{op1}, we have
\begin{align*}
F(s+t)-F(s-t) &= [G(s+t)-G(s-t)]\,\frac{\psi(s)}{\phi(s)}\\
              &= [G(s+t)-G(s-t)]\,\frac{f(s)}{g(s)},
\end{align*}
and thus
\begin{equation}
\label{op2}
[F(s+t)-F(s-t)]\, g(s) = [G(s+t)-G(s-t)]\,f(s), \quad s\in U_g, t\in\R. 
\end{equation}
On the other hand, observe that we have $Z_g\subset Z_f$ by \eqref{op1} since $\phi\neq0$ on $\R$. So \eqref{op2} holds for all $s\in U_g\cup Z_g=\R$. Therefore, Theorem~\eqref{thm:sym.cauchy.main} can be applied to \eqref{op2} and the four characterizations follows immediately. 
\end{proof}

\medskip 

It is likely that the methods of this paper work for related equations when we replace the linear mean $\alpha a + (1-\alpha) b$ by the $p$-mean $M^p_\alpha(a,b) = (\alpha a^p + (1-\alpha) b)^{\frac{1}{p}}$, for $a,b \geq 0$. Here $M^p_\alpha(a,b)$ is defined for all 
values of $ p \neq 0$. For $p=0$ the corresponding mean is defined by $M^0_\alpha(a,b) = a^\alpha b^{1-\alpha}$. Moreover, for $p \in \{ -\infty, \infty \}$ we can define $M^{-\infty}_\alpha(a,b)= \min \{ a, b \}$ and  $M^{\infty}_\alpha(a,b)= \max \{ a, b \}$. We intend to investigate this issue in a subsequent paper. 

\medskip

The essence of our approach is to reduce a functional equation to an ODE. For this strategy we need certain smoothness assumptions. It would be interesting to provide an alternative way that will not require this additional smoothness assumptions.  

% ------------------------------------------------------------------------

\subsection*{Acknowledgment}
We would like to thank Professor J\"urg R\"atz for helpful conversations on the subject of this work.

% ------------------------------------------------------------------------

\begin{thebibliography}{1}
%
\bibitem{Aczel}
J. Acz{\'e}l, \textit{A mean value property of the derivative of quadratic polynomials - without mean values and derivatives.} Math. Mag. \textbf{58} (1985), No. 1, 42--45. 
%
\bibitem{Aumann}
G. Aumann, \textit{\"Uber eine elementararen Zusammenhang zwischen Mittelwerten, Streckenrechnung und Kegelschnitten.} Tohoku Math. J. \textbf{42} (1936), 32--37.
%
\bibitem{Ebanks}
B. Ebanks, \textit{Generalized Cauchy difference equations II.} Proc. Amer. Math. Soc. \textbf{136} (2008), No. 11, 3911--3919.
%
\bibitem{Fechner-Gselmann}
W. Fechner, E. Gselmann, \textit{General and alien solutions of a functional equation and of a functional inequality.} Publ. Math. Debrecen \textbf{80} (2012), No. 1-2, 143--154.
%
\bibitem{Haruki}
S. Haruki, \textit{A property of quadratic polynomials.} Amer. Math. Monthly \textbf{86} (1979), No. 7, 577--579. 
%
\bibitem{Kannappan}
Pl. Kannappan, \textit{Rudin's problem on groups and a generalization of mean value theorem.} Aequationes Math. \textbf{65} (2003), No. 1-2, 82--92. 
%
\bibitem{Pales}
Z. P{\'a}les, \textit{On the equality of quasi-arithmetic and Lagrangian means.} J. Math. Anal. Appl. \textbf{382} (2011), 86--96.
%
\bibitem{Sablik}
M. Sablik, \textit{Taylor's theorem and functional equations.} Aequationes Math. \textbf{60} (2000), No. 3, 258--267. 
%
\bibitem{Sahoo-Riedel}
P.K. Sahoo, T. Riedel, \textit{Mean value theorems and functional equations.}  World Scieintific Publishing Co. Inc., River Edge, NJ, 1998.

\end{thebibliography}
\end{document}